\documentclass{amsart}

\usepackage{amsfonts}
\usepackage{url}
\usepackage{amscd}
\usepackage{amssymb}	

\usepackage[algosection,lined,boxed,commentsnumbered,ruled,vlined]{algorithm2e}

\usepackage[hidelinks]{hyperref}
\usepackage{hyperref}

\newtheorem{remark}{Remark}[section]

\newtheorem{theorem}{Theorem}[section]
\newtheorem{lemma}[theorem]{Lemma}
\newtheorem{definition}[theorem]{Definition}
\newtheorem{proposition}[theorem]{Proposition}
\numberwithin{equation}{section}

\usepackage{amsfonts}
\usepackage{amscd} 
\usepackage{amssymb}
\usepackage{graphicx}
\usepackage{url}
\newcounter{myalgorithm}
\setcounter{myalgorithm}{0}

\newtheorem{example}[theorem]{Example}

\begin{document}

\title[Product Subset Problem : Applications to number theory and crypto]{Product Subset Problem : Applications to number theory and cryptography}
\author{K. A. Draziotis}
\author{V. Martidis}
\author{S. Tiganourias}
\address {K. A. Draziotis \\
Department of Informatics\\
Aristotle University of Thessaloniki \\
54124 Thessaloniki, Greece}
\email{drazioti@csd.auth.gr}
\address {Vasilis Martidis\\
Department of Informatics\\
Aristotle University of Thessaloniki \\
54124 Thessaloniki, Greece}
\email{vamartid@csd.auth.gr}
\address {Stratos Tiganourias \\
Department of Informatics\\
Aristotle University of Thessaloniki \\
54124 Thessaloniki, Greece}
\email{etiganou97@gmail.com}
% presented. Separate the keywords with commas.

\keywords{Number Theory, Carmichael Numbers, Product Subset Problem, Public Key Cryptography, Naccache-Stern Knapsack cryptosystem, Birthday attack, Parallel algorithms.}
\subjclass[2010]{11Y16, 11A51, 11T71, 94A60, 68R05}
\maketitle

\begin{abstract}
We consider applications of Subset Product Problem (SPP) in number theory and cryptography.  We obtain a probabilistic algorithm that solves SPP and we analyze it with respect time/space complexity and success probability. In fact we provide an application to the problem of finding Carmichael numbers and an attack to  Naccache-Stern knapsack cryptosystem, where we update previous results.
\end{abstract}
{\let\thefootnote\relax\footnotetext{All the authors contributed equally to this research.}}

\section{Introduction}
In the present paper we study the modular version of subset product problem (MSPP). This problem is defined in a similar way as the subset sum problem \cite{papadopoulou,Freize}.
We consider an application to number theory and cryptography. Furthermore, we shall provide an algorithm for solving MSPP based on birthday paradox attack. Finally we analyze the algorithm with respect to success probability and time/space complexity. Our applications concern the problem of finding Carmichael numbers and as far as the application on cryptography, we update previous results concerning an attack to the Naccache-Stern Knapsack (NSK) public key cryptosystem.  We begin with the following definition.
\begin{definition}[Subset Product Problem]
Given a list of integers $L$ and an integer $c$, find a subset of  $L$ whose product is $c.$
\end{definition}
This problem is (strong) NP-complete using a transformation from Exact Cover by 3-Sets (X3C) problem \cite[p. 224]{Garey}, \cite{yao}. Also, see \cite[Theorem 3.2]{koblitz}, the authors proved that is at least as hard as {\it{Clique}} problem (with respect fixed-parameter tractability). In the present paper we consider the following variant. 
%Also, sometimes this problem is called :  Modular Multiplicative Knapsack Problem.
\begin{definition}[Modular Subset Product Problem : ${\rm M
SPP}_\Lambda$]
Given a positive integer  $\Lambda,$ an integer $c\in {\bf{Z}}_{\Lambda}^*$ and a vector 
$(u_0,u_1,...,u_n)\in ({\bf{Z}}_{\Lambda}^{*})^{n+1},$ find a binary vector $m=(m_0,m_1,...,m_n)$ such that
\begin{equation}\label{equation:basic}
c\equiv \prod_{i=0}^{n}u_i^{m_i}\mod{\Lambda}.
\end{equation}
\end{definition}\ \\
The ${\rm{MSPP}_{\Lambda}}(\mathcal{P},c)$ problem can be defined also as follows : Given a finite set $\mathcal{P}\subset {\bf Z}_{\Lambda}^*$ and a number $c\in{\bf Z}_{\Lambda}^*,$ find a subset $\mathcal{B}$ of $\mathcal{P},$ such that
$$\prod_{x\in {\mathcal{B}}} x \equiv c \mod \Lambda. $$

We can define {\rm{MSPP}} for a general abelian finite group $G$ as following. We write $G$ multiplicative.
\begin{definition}[Modular Subset Product Problem for $G$: ${\rm{MSPP}}_G({\mathcal{P}},c)$]
Given an element $c\in G$ and a vector 
$(u_0,u_1,...,u_n)\in G^{n+1},$ find a binary vector $m=(m_0,...,m_n)$ such that, 
\begin{equation}\label{equation:basic_G}
c = \prod_{i=0}^{n}u_i^{m_i}.
\end{equation}
\end{definition}\ \\
Although in the present work we are interested in $G={\bf{Z}}_Q^*$ where $Q$ is highly composite number (the case of Carmichael numbers) or prime (the case of NSK cryptosystem).

\subsection{Our Contribution} 
First we provide an algorithm for solving product subset problem  based on birthday paradox. This approach is not new, for instance see \cite[Section 2.3]{NC}. Here we use a variant of \cite[Section 3]{original}. We study and implement a parallel version of this algorithm. This result to an improvement of the tables provided in \cite{original}. Further, except the cryptanalysis of NSK cryptosystem, we applied our algorithm to the searching of Carmichael numbers. We used a method of Erd\H{o}s, to the problem of finding Carmichael numbers with many prime divisors. We managed to generate a Carmichael number with $19589$  prime factors\footnote{\url{http://tiny.cc/tm6miz}}. Finally, we provide an abstract version of the algorithm in \cite[Section 3]{original}, to the general product subset problem and further we analyze the algorithm as far as the selection of the parameters (this is provided in Proposition \ref{lemma:new_result}).  \\\\
{\bf Roadmap.} This paper is organized as follows. 
In section \ref{section:birthday_attack} we introduce the attack to MSPP based on birthday paradox. We further provide a detailed analysis. Section \ref{sec:applications} is dedicated to applications. In subsection \ref{sec:nsk} we obtain an application of MSPP to Naccache-Stern Knapsack cryptosystem and in subsection \ref{exp_results} we provide some experimental results.  Subsection \ref{section:carmichael} is dedicated to the problem of finding Carmichael numbers with many prime factors. We provide the necessary bibliography and known results. Finally, the last section contains some concluding remarks.
\section{Birthday Attack to Modular Subset Product Problem}\label{section:birthday_attack}
We call density of ${\rm{MSPP}_G}(\mathcal{P},c)$ the positive real number
$$d = \frac{|\mathcal{P}|}{\log_2{|G|}}.$$
If $G={\bf{Z}}_\Lambda^*,$ then
$$d = \frac{|\mathcal{P}|}{\log_2{|{ {\bf Z}_{\Lambda}^{*}}|}}= \frac{|\mathcal{P}|}{\log_2{\phi(\Lambda)}},$$
where $\phi$ is the Euler totient function. In a ${\rm{MSPP}_G}(\mathcal{P},c)$ having a large density, we expect to have many solutions.

A straightforward attack uses birthday paradox paradigm to  ${\rm MSPP}_\Lambda(\mathcal{P},c)$.  Rewriting equivalence (\ref{equation:basic}) as 
$$ \prod_{i=0}^{\alpha}u_i^{m_i} \equiv c\prod_{i=\alpha+1}^{n}u_i^{-m_i}\mod{\Lambda}, $$
for some integer $\alpha \approx  n/2,$ we construct two subsets of ${\mathbb{Z}}_{\Lambda}$, say $U_1$ and $U_2.$ The first contains elements of the form 
$\prod_{i=0}^{\alpha}u_i^{m_i}\mod{\Lambda},$  and the second 
$ c\prod_{i=\alpha+1}^{n}u_i^{-m_i}{\mod \Lambda},$
for all possible (binary) values of $\{m_i\}_i.$
 So, the problem reduces to finding a common element of sets  $U_1$ and  $U_2.$ 
Below, we provide the pseudocode of the previous algorithm.\\\\
\textbf{ {Algorithm \refstepcounter{myalgorithm}\label{1} \arabic{myalgorithm}: Birthday attack to ${\rm MSPP}_\Lambda\mathcal(\mathcal{P},c)$}}\\
		{\texttt{INPUT\ : $\mathcal{P}=\{u_i\}_i\subset {\bf Z}_{\Lambda}^*$ $(|\mathcal{P}|=n+1),$ $c\in {\bf Z}_{\Lambda}^*$ (assume that $\gcd(u_i,\Lambda)=1$)\\}
	 {\texttt{OUTPUT: $\mathcal{B}\subset \mathcal{P}$ such that  $\prod_{x\in {\mathcal{B}}}x\equiv c \mod{\Lambda}$ or Fail : There is not any solution}}\\\\
	\texttt{1:}   {\texttt{$I_1\leftarrow \{0,1,...,\lceil n/2 \rceil \}, I_2\leftarrow \{\lceil n/2 \rceil+1,...,n\}$}}
	\\
	\texttt{2:} $U_1 \leftarrow \Big\{ \prod_{i\in I_1} u_i^{\varepsilon_i}\mod \Lambda :\text{for \ all}\ \varepsilon_i\in\{0,1\} \Big\}$\\
	\texttt{3:} $U_2 \leftarrow \Big\{ c\prod_{i\in I_2} u_i^{-\varepsilon_i}\mod \Lambda: \text{for \ all}\ \varepsilon_i\in\{0,1\} \Big\}$\\
    \texttt{4:}    {\bf If} $U_1\cap U_2\not= \emptyset$ \\
    \texttt{5:}    \hspace*{0.4cm} Let $y$ be an element of $U_1\cap U_2$\\
	\texttt{6:}    \hspace*{0.4cm} {\texttt{return} $ u_i : \prod u_i \equiv c\mod{\Lambda}.$\\
    \texttt{7:}    {\bf{else}} {\texttt{return}} Fail : There is not any solution\\\\
This algorithm is deterministic, since if there is a solution to  ${\rm MSPP}_\Lambda\mathcal(\mathcal{P},c)$ the algorithm will find it.
To construct the solution in step 6, we use the equation
\begin{equation}\label{birthday_equation}
\prod_{i\in I_1}u_i^{\varepsilon_i} = c \prod_{j\in I_2}u_j^{-\varepsilon_j} \ \ (\rm{in}\ \ {\bf Z}_\Lambda).
\end{equation} 
It turns out $y = \prod_{i=0}^{n} u_i^{\varepsilon_i} \mod{\Lambda}.$
For storage we need $2^{n/2+1}$ elements of ${\bf Z}_\Lambda^{*}.$ In line 4 we compute a common element. To do this we first sort the elements of $U_1, U_2$ and then we apply binary search. Overall we need $O(2^{n/2}\log_2{n})$ arithmetic operations in the multiplicative group ${\bf Z}_\Lambda^{*}.$ The drawback of this algorithm is the large space complexity.

We can improve the previous algorithm as far as the space complexity. %First, we need to define the notion of hamming weight of $c.$
\section{An Improvement of Algorithm \ref{1}}
We provide the following definitions.
\begin{definition}
Let $c \in {\bf{Z}}_{\Lambda}^*$. We define
$${\rm{sol}}(c;{\mathcal{P}},\Lambda) = \{I\subset \{0,1,...,n\}:c\equiv\prod_{i\in I,\ u_i\in\mathcal{P}}u_{i}\mod \Lambda \}.$$  
\end{definition}
%From this set we can easily build the set of all solutions of the ${\rm MSPP}_\Lambda\mathcal(\mathcal{P},c).$ Indeed, if $J_1, J_2,...,J_k$ are all the sets in ${\rm{sol}}(c;{\mathcal{P}},\Lambda),$ then the set of solution is  
%$$\Big\{\{u_i\} : i\in J_1\Big\}\bigcup \Big\{\{u_i\} : i\in J_2\Big\}\bigcup \cdots \bigcup\Big\{\{u_i\} : i\in J_k\Big\}.$$
%We can simplify if we use the following characteristic function 
Let the map,
$$\chi:{\rm{sol}}(c;{\mathcal{P}},\Lambda)\rightarrow \{0,1\}^{n+1},$$
such that $\chi(I)=(\varepsilon_0,\varepsilon_1,...,\varepsilon_n),$ where $\varepsilon_i=1$ if $i\in I$ else $\varepsilon_i=0.$
\begin{definition}
We define,
$${\rm{Sol}}(c;{\mathcal{P}},\Lambda) = \{(u_0^{\varepsilon_0},u_1^{\varepsilon_1},...,u_n^{\varepsilon_n}): \chi(I) = (\varepsilon_0,\varepsilon_1,...,\varepsilon_n)\ \text{for\ all}\ I\in {\rm{sol}}(c;{\mathcal{P}},\Lambda)\}.$$
\end{definition}
\begin{definition}
To each element $I$ of ${\rm{sol}}(c;\mathcal{P},\Lambda)$ (assuming there exists one) we correspond the natural number $H_{I}(c)=|I|$ (the cardinality of $I$). We call this number local Hamming weight of $c$ at $I.$ We call Hamming weight   $H(c)$ of $c,$ the minimum of all these numbers. I.e.
$$H(c) = \min\{H_I(c): I\in  {\rm{Sol}}(c;{\mathcal{P}},\Lambda)\}.$$
\end{definition}
Remark that the local Hamming weight of $c$ at $I$ is in fact the Hamming weight of the binary vector $\chi(I).$ That is, the sum of all the entries of $\chi(I).$ Thus,
$$H_I(c) = \sum_{i=0}^{n} \varepsilon_i, {\rm{where}} \ \chi(I)=(\varepsilon_0,\varepsilon_1,...,\varepsilon_n).$$
\begin{example}
 Let $\mathcal{P}=\{2,3,...,10^{7}\},$ $\Lambda=10000019,$ $c=190238.$ 
 There is an element $\Sigma$ of ${\rm{Sol}}(c;{\mathcal{P}},\Lambda),$ 
   $$\Sigma = (9851537, 303860, 4680021, 9647209, 2006838, 9984877,$$ $$2512434, 2126904, 1942182, 8985302, 2193757).$$
   Note that we have trimmed all the ones. I.e. 
   $$\Sigma'=(1,..,1,303860,1,...,1942182,1,...,9984877,1...,1).$$
Straightforward calculations provide,
$$\prod_{x\in \Sigma} x = \prod_{x\in \Sigma'} x\equiv 190238=c\pmod{\Lambda}.$$
Also there is an element in ${\rm{sol}}(c;{\mathcal{P}},\Lambda)$ say $I,$ such that,
$$\chi(I)=\Sigma'.$$ 

So in this case the local hamming weight of $c$ at $I$ is $11.$ However, the computation of 	the hamming weight of $c$ is more difficult. Note that $H(c)\geq 2,$ except if some $u_i$ is equal to $c.$ We do not consider this trivial case. For now, we can only say that $H(c)\leq 11.$ I.e. every local Hamming weight is an upper bound for $H(c).$
\end{example}
Now, let $I\in {\rm{sol}}(c;{\mathcal{P}},\Lambda).$ We consider two positive integers, say $h_1,h_2,$ such that, $h_1+h_2=H_I(c),$ 
%We consider also the set $\mathcal{P}=\{u_0,...,u_n\}$ 
and two disjoint subsets $I_1,\ I_2$ of $\{0,1,\dots,n\}$ with $|I_1|=|I_2|=b,$ for some positive integer $b \leq n/2.$
Finally, we consider the sets,
$$U_{h_1}(I_1;\mathcal{P},\Lambda) = \Big\{\prod_{i\in I_1} u_i^{\varepsilon_i}\pmod \Lambda:\sum_{i\in I_1} \varepsilon_i = h_1\Big\},$$
$$U_{h_2}(I_2,c;\mathcal{P},\Lambda) = \Big\{c\prod_{i\in I_2} u_i^{-\varepsilon_i}\pmod \Lambda:\sum_{i\in I_2} \varepsilon_i = h_2\Big\}.$$
%The set $U_1$ of  algorithm 1 is the $U_{h_1}$ for $h_1=\lceil n/2  \rceil.$
We usually write them as $U_{h_1}(I_1)$ and $U_{h_2}(I_2,c)$ since $\mathcal{P}$ and $\Lambda$ are known.
We have,
$$|U_{h_1}(I_1)|=\binom{|I_1|}{h_1},\ |U_{h_2}(I_2,c)|=\binom{|I_2|}{h_2}.$$ 
%In our algorithm $|I_1|$ and $|I_2|$ are the same (or very close) and also are disjoint subsets of $\{0,1,\dots,n\}.$
\begin{remark}
The set $U_1$ of Algorithm \ref{1} is written,
$$U_1 = \bigcup_{h_1=1}^{\lceil n/2\rceil +1}U_{h_1}(\{0,1,\dots,\lceil n/2 \rceil\};\mathcal{P},\Lambda).$$
I.e. $U_1$ is the union of the sets $\{U_{h}(I;\mathcal{P},\Lambda)\}_{1\leq h\leq \lceil n/2\rceil +1}$ for $I=\{0,1,...,\lceil n/2\rceil\}.$
Similar $U_2$ is written,
$$U_2 = \bigcup_{h_2=0}^{n - \lceil \frac{n}{2}\rceil}U_{h_2}(\{\lceil n/2 \rceil+1,...,n\},c;\mathcal{P},\Lambda).$$
\end{remark}

Instead of using $U_1,U_2$ we use subsets of them. The choice of subsets creates a probability distribution at the output. So this choice must be studied as fas as the success probability. 
For the following algorithm we assume that we know a local Hamming weight of the target number $c.$\ \\\\
{\bf Algorithm\refstepcounter{myalgorithm}\label{2} \arabic{myalgorithm} ${\rm BA\_MSPP}_{\Lambda}({\mathcal{P}},c;b,\ell,Q,{\rm iter}) $\footnote{BA : Birthday Attack} : Memory efficient attack to ${\rm MSPP}_\Lambda(\mathcal{P},c)$}\\
	{\texttt{INPUT: \\ ${\bf i}.$ A set $\mathcal{P}=\{u_i\}_i\subset {\bf Z}_{\Lambda}^*$ with $|\mathcal{P}|=n+1$ (assume that $\gcd(u_i,\Lambda)=1$)\\
	${\bf ii}.$ a number $c\in {\bf Z}_{\Lambda}^*$\\ 
	${\bf iii}.$  a local Hamming weight of $c,$ say $\ell$\\
	 ${\bf iv}.$  a positive number $b :$ $\ell \leq b\leq n/2$\\
%	 ${\bf v}.$  a hash function ${\mathbb{H}},$ where we assume that the output is given as a series of hex characters.
${\bf v}.$  a compression function ${\mathbb{H}}$
 		\\ 
% 		${\bf vi}.$  $Q:$ is a positive integer. ${\mathbb{H}}_Q(x)$ is the sequence consists from the first $4Q$ bits (or $Q-$hex characters) of the hash function ${\mathbb{H}}$ applied to some number $x$.\\
 		${\bf vi}.$  a positive integer {\it{iter}}\\\\}
	 {\texttt{OUTPUT: a set $\mathcal{B}\subset \mathcal{P},$ such that $\prod_{x\in {\mathcal{B}}}x\equiv c \mod{\Lambda}$ or Fail}}\\\\
	 \texttt{1:} $(h_1,h_2)\leftarrow (\lfloor \ell/2\rfloor,\lceil \ell/2\rceil)$\\
	 \texttt{2:} {\bf For}} $i$ in $1,\dots ,iter$ \\
	 \texttt{3:}  \hspace*{0.3cm} \  {{$(I_1,I_2) \xleftarrow{\$}\{0,...,n\}\times \{0,...,n\}$ \ \# {\small with $I_1,I_2$ disjoint and $|I_1|=|I_2|=b$ 
	 }}}\\
	 \texttt{4:} \hspace*{0.4cm} $U_{h_1}^*(I_1) \leftarrow \Big\{ {\mathbb{H}}\big{(}\prod_{i\in I_1} u_i^{\varepsilon_i}\pmod \Lambda \big{)}:\sum_{i\in I_1}\varepsilon_i=h_1,\ \varepsilon_i\in\{0,1\} \Big\}$\\   
	 \texttt{5:} \hspace*{0.4cm} {\bf For} each $(\varepsilon_i)_i$ such that $:\sum_{i\in I_2}\varepsilon_i=h_2$\\
	 \texttt{6:} \hspace*{0.8cm} {\bf If\ }  ${\mathbb{H}}\big{(}c\prod_{i\in I_2} u_i^{-\varepsilon_i}\pmod \Lambda \big{)}\in U_{h_1}^*(I_1)$\\   
    \texttt{7:}    \hspace*{1.2cm} Let $y$ be an element of $U_{h_1}^*(I_1)\cap U_{h_2}^*(I_2,c)$\\
	\texttt{8:}    \hspace*{1.2cm} {\texttt{return} $\{ u_i\}_i $ such that $\prod u_i \equiv c\mod{\Lambda}$ \texttt{and terminate}\\
    \texttt{9:}   {\texttt{return} Fail}\ \# {\small if for all the iterations the algorithm failed to find a solution}\\\\
    
    This algorithm is a memory efficient version of algorithm \ref{1}, since we consider subsets of $U_1, U_2.$ Although, this algorithm may fail, even when ${\rm MSPP}_\Lambda(\mathcal{P},c)$ has a solution. For instance, assuming that ${\rm{sol}}(c;\mathcal{P},\Lambda)\not=\emptyset,$ if we pick $I_1, I_2$ and happens that the union $I_1\cup I_2 \not\in {\rm{sol}}(c;\mathcal{P},\Lambda),$ then the algorithm will fail. I.e. the problem may have a solution but the algorithm failed to find it. This may occur when $b<n/2,$ that is $I_1\cup I_2 \subset \{0,1,...,n\}.$ 
    If  $I_1\cup I_2 = \{0,1,...,n\},$  then the algorithm remains probabilistic, since we consider a specific choice of $(h_1,h_2)$ and not all the possible $(h_1,h_2),$ with $h_1+h_2=\ell.$ If we consider all $(h_1,h_2)$ such that $h_1+h_2=n,$ then the algorithm turns out to be deterministic. %In this case the analysis below is similar.

We analyze the algorithm line by line.\\
{\it Line 3:} This can be implemented easily in the case where $2b<\sqrt{n}.$ Indeed, we can use rejection sampling in the set $\{0,...,n\}$ and construct a list of length $2b.$ Then, $I_1$ is the set consisting from the first $b$ elements and $I_2$ the rest. If $2b\geq \sqrt{n}$ then, we have to sample from the set $\{0,1,...,n\},$ so the memory increases since we have to store the set $\{0,1,...,n\}.$ For instance, when we apply the algorithm to the searching of Carmichael numbers we have $2b\ll \sqrt{n}.$ In the case of NSK cryptosystem we usually have $2b>\sqrt{n}.$ \\
{\it Line 4:} The most intensive part (both for memory and time complexity) is the construction of the set $U_{h_1}^*$. Here we can parallelize our algorithm to decrease time complexity. To reduce the space complexity we use the parameter $b\le n/2$ and the compression function ${\mathbb{H}}.$ For instance as a a compression function we  consider $Q-$ strings of the output of a hash function. We consider that the output of the hash function is a hex string. In subsection \ref{how_to_choose_Q} we provide a strategy to choose $Q.$\\
{\it Line 5-6:} In Line 4 we stored $U_{h_1}^*(I_1),$ in this line  we compute {\it{on the fly}} the elements of the second set $U_{h_2}^*(I_2,c)$ and check if any is in $U_{h_1}^*(I_1).$ So we do not need to store $U_{h_2}^*.$ A suitable data structure for the searching is the hashtable, which we also used in our implementation.  Hashtables have the advantage of the fast insert, delete and search operations. Since these operations have $O(1)$ time complexity on the average.\\
{\it Line 7-8:} Having the element found by the previous step  (Line 6), say $y,$ we construct the $u_i$'s such that their product is $y.$ We return Fail if the intersection is empty for all the iterations.
\begin{remark}
If we do not consider any hash function and ${\rm iter}=1,$ then we write 
${\rm BA\_MSPP}_{\Lambda}({\mathcal{P}},c;b,\ell).$
\end{remark}

\subsection{Space complexity}

We assume that there is not any collision in the construction 
of $U_{h_1}^*(I_1)$ and $U_{h_2}^*(I_2,c),$ or in other words we choose $Q$ to minimize the probability to have a collision. I.e. we choose $2^{4Q}\gg \max\{|U_1|,|U_2|\}$ and we assume that ${\mathbb{H}}$ is behaving random enough. In practice (or at least in our examples) we always have this constraint.

So we get, 
$$|U_{h_1}(I_1)|=|U_{h_1}^*(I_1)|=\binom{|I_1|}{h_1}\ \text{ and}\ \ |U_{h_2}(I_2,c)|=|U_{h_2}^*(I_2,c)|=\binom{|I_2|}{h_2},$$ 
where $(h_1,h_2) = (\lfloor \ell /2\rfloor,\lceil \ell/2\rceil).$ By choosing $|I_1|=|I_2|= b,$ we get
 $$|U_{h_1}^*(I_1)|= \binom{b}{h_1}\ \ \text{and}\ \ |U_{h_2}^*(I_2,c)|= \binom{b}{h_2}.$$
 We set 
 $$S_b = \binom{b}{h_1} = B_{b}(h_1).$$
 In our algorithm, we need to store the $S_b$ hashes  of the set $U_{h_1}^*(I_1).$ We need $4Q-$bits for keeping $Q-$hex digits in the memory. So, overall we store $4QS_b$ bits. Furthermore, we store   the binary exponents $(\varepsilon_i)_i$ that are necessary for the computation of the products, $\prod_{i\in I_1} u_i^{\varepsilon_i}.$ This is needed, since we must reconstruct $c$ as a product of $(u_i)_i.$ These are $S_b,$ thus we need $b\times S_b-$ bits. 
 Since we also need to store the set $\mathcal{P}$ of length $n+1,$ we conclude that,
 \begin{equation}\label{formula:memory}
 \mathbb{M} < (4Q + b)S_b + (n+1)B\ \ (\text{bits}),
 \end{equation}
where $B = \max\{\log_{2}(x):x\in \mathcal{P}\}.$
Remark that ${\mathbb{M}}$ does not depend on the modulus $\Lambda.$ 
%but only from the local Hamming weight, the bound $b$ and the parameter $Q.$

\begin{table}[h]
\begin{tabular}{|l||l|l|l|l|l|l|l|}
\hline 
$\ell$ & 9 & 11 & 13 & 15 & 17 & 19 & 21\\ \hline
${\mathbb{M}}\ (GB)$ &  0.029 & 0.05 & 0.2 & 1.16 & 6.15 & 28.61 & 117.2 \\ \hline
\end{tabular}
\caption{For $b=50,\ Q=12$ and $\mathcal{P}=\{2,3,...,10^7\}$.}
\label{Tab:1}
\end{table} 
If we can describe in an efficient way the set 
$\mathcal{P}$ we do not need to store it. 
Say, that $\mathcal{P}=\{2,3,...,n\}.$ Then, there is no need to 
store it in the memory, since the sequence $f(x) = x+1$ 
describes efficiently the set $\mathcal{P}.$ Also, in other situations $B$ can be stored using $O(|{\mathcal{P}}|\log_{2}(|{\mathcal{P}}|))$ bits. We call such sets {\it nice} and they can save us enough memory. In fact, for nice sets the inequality (\ref{formula:memory}) changes to,
\begin{equation}\label{formula:memory_for_nice_sets}
 \mathbb{M} < (4Q + b)S_b + O(n\log_{2}n)\ \ (\text{bits}).
 \end{equation}
 In fact when we apply this algorithm to the problem of finding Carmichael numbers, we shall see that the set $\mathcal{P}$ is {\it nice}.
 
 Finally, if $U_{h_1}$ is very large we can make chunks of it, to store it in the memory. Note that this can not be done if we directly compute the intersection of $U_{h_1}\cap U_{h_2}$ as in \cite{original}. This simple trick considerable improves the algorithms in \cite{original}.
\subsection{Time complexity}
Time complexity is dominated by the construction of the sets $U_{h_1}$ and $U_{h_2}$ and the calculation of their intersection.
We work with $U_{h_1}$ instead of $U_{h_1}^*$ since all the multiplications are between the elements of $U_{h_1}$, only in the searching phase we move to $U_{h_1}^*$.
Let $M_\Lambda$ be the bit-complexity of the multiplication of two integers $\mod \Lambda.$ So 
$$M_\Lambda=O( (\log_2{\Lambda})^{1+\varepsilon})$$ for some $0<\varepsilon\leq 1$ (for instance Karatsuba suggests $\varepsilon =\log_{2}{3}-1$ \cite{karatsuba}). In fact recently was proved $M_\Lambda= O(\log_2{\Lambda}\log_2{(\log_2{\Lambda}}))$ \cite{nlogn}.
To construct the sets $U_{h_1}$ and $U_{h_2}$ (ignoring the cost for the inversion ${\mod \Lambda}$)  we need $M_\Lambda h_1B_{b}(h_1)=M_\Lambda h_1 S_b$ bit-operations} for the set $U_{h_1}$ and  $M_\Lambda h_2B_b(h_2+1)\approx M_\Lambda h_2S_b$ bit-operations for $U_{h_2}.$
So overall,
$${\mathbb{T}}_1 = M_\Lambda (h_1S_b+h_2S_b)=M_\Lambda S_b(h_1+h_2) \ \text{bits}.$$
Considering the time complexity for finding a collision in the two sets by using a hashtable, we get 
$${\mathbb{T}} = {\mathbb{T}}_1  + O(1)S_b\ \text{bits}\ \text{on\ average}$$
and 
$${\mathbb{T}} = {\mathbb{T}}_1  + O(1)S_b^2\ \text{in\ the\ worst\ case}.$$
We used that $h_1\approx h_2$ (they differ at most by $1$). 
In case we have $T$ threads we get about ${\mathbb{T}}/T$ (bit operations) instead of ${\mathbb{T}}.$
\subsection{Success Probability}
In the following Lemma we compute the probability to get a common element in $U_{h_1}(I_1)$ and $U_{h_2}(I_2,c)$ when 
$(I_1,I_2) \xleftarrow{\$}\{0,...,n\}\times \{0,...,n\},$ where $I_1,I_2$ are disjoint, with $b$ elements. Let $0\leq y\leq x.$ With $B_x(y)$ we denote the binomial coefficient $\binom{x}{y}.$
\begin{lemma}\label{lemma:step7}
Let $h_1,h_2$ be positive integers and $\ell = h_1+h_2.$ 
The probability to get $U_{h_1}(I_1)\cap U_{h_2}(I_2,c)\not=\emptyset$ is,
$$
{\mathbb{P}}=\frac{B_b(h_1)B_b(h_2)}{B_{n+1}(\ell)}.
$$
\end{lemma}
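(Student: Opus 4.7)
My plan is to turn the algebraic event into a purely combinatorial one and then recognise the resulting probability as a multivariate hypergeometric distribution.

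First I would unpack the definition of the intersection. A common element of $U_{h_1}(I_1)$ and $U_{h_2}(I_2,c)$ corresponds to binary vectors $(\varepsilon_i)_{i\in I_1}$ and $(\delta_j)_{j\in I_2}$ with $\sum\varepsilon_i=h_1$ and $\sum\delta_j=h_2$ satisfying
$$\prod_{i\in I_1}u_i^{\varepsilon_i}\equiv c\prod_{j\in I_2}u_j^{-\delta_j}\pmod{\Lambda}.$$
Setting $A=\{i\in I_1:\varepsilon_i=1\}$ and $B=\{j\in I_2:\delta_j=1\}$, the disjointness of $I_1,I_2$ forces $A\cap B=\emptyset$ and the above is equivalent to $A\cup B\in {\rm sol}(c;\mathcal{P},\Lambda)$, with $|A\cup B|=h_1+h_2=\ell$. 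So the event in question is that some size-$\ell$ solution $I^{*}$ of the MSPP instance satisfies $I^{*}\subset I_1\cup I_2$, $|I^{*}\cap I_1|=h_1$ and $|I^{*}\cap I_2|=h_2$.

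Next, fixing the solution $I^{*}$ of local Hamming weight $\ell$ (this is the hypothesis encoded in the parameter $\ell$), I would compute the probability over the random choice of $(I_1,I_2)$. Picking a disjoint pair of $b$-subsets of $\{0,1,\dots,n\}$ uniformly is equivalent to choosing an ordered three-bin partition of $\{0,1,\dots,n\}$ with bin sizes $b,\ b,\ n+1-2b$ (the third bin being the unused indices). The joint distribution of the intersection sizes $(|I^{*}\cap I_1|,|I^{*}\cap I_2|,|I^{*}\setminus(I_1\cup I_2)|)$ is then exactly multivariate hypergeometric with population sizes $b,\ b,\ n+1-2b$ and sample size $\ell$, yielding
$$\mathbb{P}=\frac{\binom{b}{h_1}\binom{b}{h_2}\binom{n+1-2b}{\ell-h_1-h_2}}{\binom{n+1}{\ell}}.$$
Because $\ell=h_1+h_2$, the third binomial collapses to $\binom{n+1-2b}{0}=1$, giving exactly $\mathbb{P}=B_b(h_1)B_b(h_2)/B_{n+1}(\ell)$.

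The main subtlety, which I would flag explicitly, is the implicit assumption that the fixed solution $I^{*}$ is the unique source of a collision; if several size-$\ell$ elements of ${\rm sol}(c;\mathcal{P},\Lambda)$ exist, then the event is a union over them and inclusion–exclusion would in principle inflate or deflate the answer. In that setting the stated formula is best read as the per-solution contribution, and the true success probability is at least $\mathbb{P}$ (taking one solution) and at most $|\{I\in{\rm sol}(c;\mathcal{P},\Lambda):|I|=\ell\}|\cdot\mathbb{P}$ by a union bound. Aside from this, the argument is just the hypergeometric counting outlined above; no further machinery is needed.
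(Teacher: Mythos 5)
Your argument is correct, and it is worth noting how it sits relative to the paper: the paper does not actually prove the lemma in-text (it defers to \cite[Section 3]{original}) and only offers a self-contained alternative in the special case $2b=n+1$, via the univariate hypergeometric distribution. Your proof is a clean generalization of that alternative to arbitrary $b\leq n/2$: by viewing the random choice of disjoint $(I_1,I_2)$ as a three-bin partition of $\{0,\dots,n\}$ with sizes $b,\,b,\,n+1-2b$ and observing that every collision forces a size-$\ell$ solution $I^*$ with $|I^*\cap I_1|=h_1$, $|I^*\cap I_2|=h_2$ and $I^*\subset I_1\cup I_2$, you land on the multivariate hypergeometric probability whose third factor $\binom{n+1-2b}{0}$ collapses to $1$. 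This buys a self-contained proof covering the regime $2b<n+1$ (the one actually used in the Carmichael application, where $2b\ll n$), which the paper's in-text argument does not reach. Your caveat about multiplicity of solutions is also well taken: the stated $\mathbb{P}$ is exactly the success probability only when there is a unique solution of local Hamming weight $\ell$; otherwise it is a per-solution contribution bounded between $\mathbb{P}$ and the union bound, a point the paper (and \cite{original}) leaves implicit. No gap in your reasoning.
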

For the proof see \cite[section 3]{original}.

We can easily provide another and simpler proof in the case $2b=n+1$ (this occurs very often when attacking Naccache-Stern cryptosystem). Then,  
$${\mathbb{P}} = hyper(x;2b,b,\ell),$$
where {\it hyper} is the hypergeometric distribution,
$$hyper(x;N,b,\ell)=Pr(X=x)=\frac{\binom{b}{x}\binom{N-b}{\ell-x}}{\binom{N}{\ell}}.$$
Where, \\
$\bullet$ $N=n+1$ is the population size\\
$\bullet$ $\ell$ is the number of draws\\
$\bullet$ $b$ is the number of successes in the population\\
$\bullet$ $x$ is the number of observed successes\\
Adapting to our case, we set
$N=2b=n+1, \ell=h_1+h_2,$ and $x=h_1.$ Then,
$$hyper(x=h_1;n+1,b,\ell)=Pr(X=h_1)=\frac{\binom{b}{h_1}\binom{b}{\ell-h_1}}{\binom{n+1}{\ell}}=\mathbb{P}.$$
The expected value is $\frac{b\ell}{n+1}$ and since $b=(n+1)/2$ we get $EX = \frac{\ell}{2}.$ Since the random variable $X$ counts the successes we expect on average to have $\ell/2$ after considering enough instances (i.e. choices of $I_1,$ $I_2$). The maximum value of $\ell$ is $n/2.$ Thus, in this case the expected value is maximized, hence we expect our algorithm to find faster a solution from another one that uses smaller value for $\ell.$

 Also, we need about 
$$\frac{1}{{\mathbb{P}}}=\frac{\binom{n+1}\ell}{\binom{b}{h_1}\binom{b}{h_2}}\  \text{iterations on average to find a solution}.$$

One last remark is that the contribution of $b$ is bigger than the contribution of hamming weight in the probability ${\mathbb{P}}.$

\begin{figure}[!htb]
%\raggedright
\includegraphics[scale=0.34]{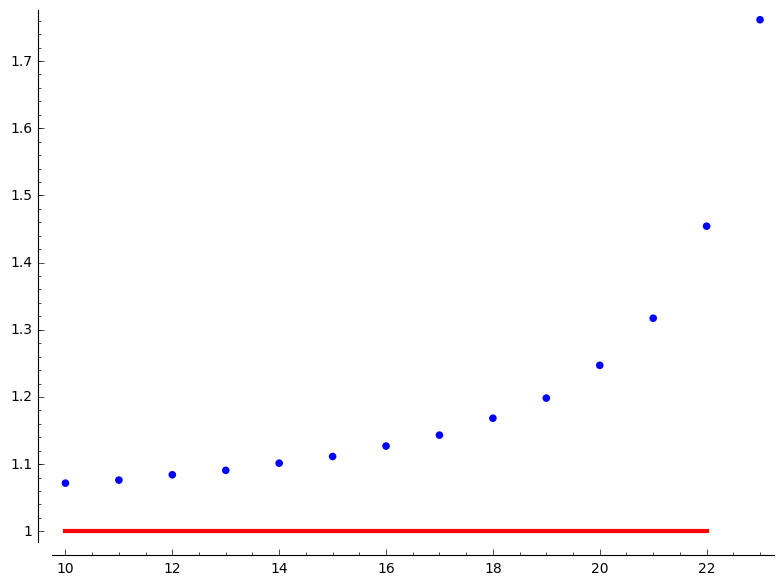}
\caption{We fixed $n = 23000$ and $b = 35.$ We consider pairs $(h,f(h)),$ where $f(h) = \big({\mathbb{P}(b+1,h)}/{\mathbb{P}(b,h+1)}\big{)}$ and $h$ the hamming weight. We finally normalized the values by taking logarithms. So, for a given $(b,h),$ if we want to increase the probability is better strategy to increase $b$ than $h.$}\label{fig:3}
\end{figure}

\subsubsection{The best choice of $h_1$ and $h_2$}\label{section:h1h2}
The choice of $h_1, h_2$ (in line 1 of algorithm \ref{2}) is $h_1 = \lfloor \ell/2\rfloor,$ $h_2 = \lceil \ell/2\rceil.$ This can be explained easily, since these values maximize the probability ${\mathbb{P}}$ of Lemma \ref{lemma:step7}. 
We set 
$$J_{\ell}=\{(x,y)\in {\bf{Z}}^2: x+y=\ell, 0\leq x \leq y\}.$$
Observe that $(\lfloor \ell/2\rfloor,\lceil \ell/2\rceil) \in J_{\ell}.$
\begin{proposition}\label{lemma:new_result}
Let $b$ and $n$ be fixed positive integers such that $\ell=x+y\leq b\leq n/2.$ Then the finite sequence ${\mathbb{P}}:J_{\ell}\rightarrow {\mathbb{Q}},$ defined by
$${\mathbb{P}}(x,y) = \frac{B_b(x)B_b(y)}{B_{n+1}(\ell)},$$ is maximized for $(x,y) = (\lfloor \ell/2\rfloor,\lceil \ell/2\rceil).$ 
\end{proposition}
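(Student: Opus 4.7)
The plan is to reduce the claim to showing that the integer sequence $a_x:=\binom{b}{x}\binom{b}{\ell-x}$, defined for $x=0,1,\dots,\lfloor\ell/2\rfloor$, is non-decreasing in $x$, so that its maximum over $J_\ell$ is attained at the right endpoint $x=\lfloor\ell/2\rfloor$, $y=\lceil\ell/2\rceil$. Since the denominator $B_{n+1}(\ell)$ in ${\mathbb{P}}(x,y)$ does not depend on the point of $J_\ell$ under consideration, maximizing ${\mathbb{P}}$ is equivalent to maximizing the numerator $a_x$. Note that the hypothesis $\ell\le b\le n/2$ guarantees that every $a_x$ in range is well-defined and nonzero, so we may freely take ratios.

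The central computation is the ratio of consecutive terms. I would write
\[
\frac{a_{x+1}}{a_x}=\frac{\binom{b}{x+1}}{\binom{b}{x}}\cdot\frac{\binom{b}{\ell-x-1}}{\binom{b}{\ell-x}}=\frac{(b-x)(\ell-x)}{(x+1)(b-\ell+x+1)},
\]
valid for $0\le x\le \lfloor\ell/2\rfloor-1$. Cross-multiplying and simplifying, one obtains
\[
(b-x)(\ell-x)-(x+1)(b-\ell+x+1)=(b+1)(\ell-2x-1),
\]
so $a_{x+1}\ge a_x$ is equivalent to $\ell\ge 2x+1$, i.e.\ $x\le(\ell-1)/2$.

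From here the conclusion is immediate in both parities. If $\ell=2m$ is even, the inequality holds strictly for every $x\in\{0,\dots,m-1\}$, so $a_0<a_1<\dots<a_m$ and the unique maximum on $J_\ell$ is at $(m,m)=(\lfloor\ell/2\rfloor,\lceil\ell/2\rceil)$. If $\ell=2m+1$ is odd, the inequality holds (strictly for $x<m$, with equality at $x=m$ where $\ell-2x-1=0$), so $a_0<\dots<a_m=a_{m+1}$; restricted to $J_\ell$ (where $x\le y$ forces $x\le m$), the maximum is attained at $(m,m+1)=(\lfloor\ell/2\rfloor,\lceil\ell/2\rceil)$, as desired.

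There is no real obstacle here: the only thing to be careful about is keeping the domain straight, since $J_\ell$ chops off the half $x>\lfloor\ell/2\rfloor$ — but by the symmetry $a_x=a_{\ell-x}$ this chopping is harmless and the monotonicity argument on $\{0,\dots,\lfloor\ell/2\rfloor\}$ suffices. A minor bookkeeping point is to verify that the denominators $(x+1)$ and $(b-\ell+x+1)$ in the ratio are strictly positive throughout the range, which follows at once from $0\le x\le\lfloor\ell/2\rfloor-1<\ell\le b$.
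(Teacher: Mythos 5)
Your proof is correct, and it takes a genuinely different route from the paper's. The paper first establishes the identity $\binom{b}{x}\binom{b}{\ell-x}\binom{2b}{b} = \binom{\ell}{x}\binom{2b-\ell}{b-x}\binom{2b}{\ell}$ (a Vandermonde-type rearrangement of factorials), rewrites $\mathbb{P}$ so that the $x$-dependence sits in the product $\binom{\ell}{x}\binom{2b-\ell}{b-x}$, and then argues that each of these two factors is separately maximized at $x=\lfloor \ell/2\rfloor$, hence so is their product. You instead run a direct ratio test on $a_x=\binom{b}{x}\binom{b}{\ell-x}$, and the key computation $(b-x)(\ell-x)-(x+1)(b-\ell+x+1)=(b+1)(\ell-2x-1)$ is correct, as are the positivity checks on the denominators (both rely on $\ell\leq b$) and the restriction to $x\leq\lfloor\ell/2\rfloor$ imposed by $J_\ell$. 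What your argument buys is self-containedness and sharper information: you get strict monotonicity of $a_x$ on $\{0,\dots,\lfloor\ell/2\rfloor\}$, hence uniqueness of the maximizer on $J_\ell$, without needing the auxiliary identity or the (true but asserted) fact that $\binom{\ell}{x}$ and $\binom{2b-\ell}{b-x}$ peak at the same index. What the paper's route buys is a conceptual link to the hypergeometric distribution already used in the surrounding discussion, since the right-hand side of its identity is exactly $\binom{2b}{b}^{-1}$ times that distribution's numerator; your proof forgoes that connection but is the more elementary and airtight of the two.
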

We need the following simple lemma.
\begin{lemma}
$$\binom{b}{x}\binom{b}{\ell-x}\binom{2b}{b} = \binom{\ell}{x}\binom{2b-\ell}{b-x}\binom{2b}{\ell}.$$
\end{lemma}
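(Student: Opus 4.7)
The plan is to verify the identity by a direct factorial expansion of both sides and observe that they collapse to the same multinomial coefficient. Writing each of the six binomials as $\binom{n}{k}=n!/(k!(n-k)!)$, on the left the two copies of $b!$ from $\binom{2b}{b}$ cancel against those appearing in $\binom{b}{x}$ and $\binom{b}{\ell-x}$, leaving
$$\frac{(2b)!}{x!\,(b-x)!\,(\ell-x)!\,(b-\ell+x)!}.$$
On the right, the factors $\ell!$ and $(2b-\ell)!$ arising from $\binom{2b}{\ell}$ cancel against those in $\binom{\ell}{x}\binom{2b-\ell}{b-x}$, producing the same fraction. Both sides therefore equal the multinomial coefficient $\binom{2b}{x,\,\ell-x,\,b-x,\,b-\ell+x}$, and the identity follows at once.

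If one prefers a conceptual justification, the same equality admits a combinatorial reading: both sides count ordered set-partitions of $[2b]$, conventionally presented as the union of two blocks of size $b$, into four parts of sizes $x$, $\ell-x$, $b-x$, $b-\ell+x$. The left-hand side first picks an $x$-subset in the first $b$-block and an $(\ell-x)$-subset in the second (and recovers the global choice of a $b$-set as a bookkeeping factor), while the right-hand side first chooses the total $\ell$-set and then splits it; the bijection between the two encodings gives the identity.

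The only point to check before carrying out the cancellation is that every binomial argument is a non-negative integer, i.e.\ $0\le x\le b$, $0\le \ell-x\le b$, and $b-\ell+x\ge 0$. These hold whenever $\ell\le b$ and $0\le x\le \ell$, which is precisely the regime supplied by the hypothesis of Proposition \ref{lemma:new_result}. Consequently no serious obstacle arises: the proof is purely a bookkeeping exercise on factorials, and the resulting identity will then be the algebraic lever used to reduce the maximisation of ${\mathbb{P}}(x,y)$ on $J_\ell$ to analysing the unimodal sequence $x\mapsto \binom{\ell}{x}\binom{2b-\ell}{b-x}$.
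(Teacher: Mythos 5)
Your proof is correct and follows essentially the same route as the paper, which simply expands all six binomial coefficients into factorials and observes that both sides reduce to the multinomial coefficient $\frac{(2b)!}{x!\,(\ell-x)!\,(b-x)!\,(b-\ell+x)!}$. The combinatorial reading and the check of non-negativity of the arguments are welcome extras, but they do not change the argument in any essential way.
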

\begin{proof}
It is straightforward by expressing the binomial coefficients in terms of factorials and rearranging them. 
\end{proof}

\begin{proof}[Proof of Proposition \ref{lemma:new_result}]
From the previous lemma we have
\begin{equation}\label{simple_binomial_identity}
\frac{\binom{b}{x} \binom{b}{\ell-x} }{\binom{2b}{\ell} } = \frac{\binom{\ell}{x}\binom{2b-\ell}{b-x}}{\binom{2b}{b}}.
\end{equation}

Since $b,\ell$ are fixed, the maximum of the right hand side is at $x = \lfloor \ell/2 \rfloor$. Indeed, both sequences $\binom{b}{x}$ and $\binom{2b-\ell}{b-x}$ are positive and maximized at $x = \lfloor \ell/2 \rfloor.$ So also the product $\binom{\ell}{x}\binom{2b-\ell}{b-x}$ is maximized at $x = \lfloor \ell/2 \rfloor.$
The same occurs to the left hand side. Since the denominator of the left hand side in (\ref{simple_binomial_identity}) is fixed, the numerator $B_b(x)B_b(\ell-x)=\binom{b}{x} \binom{b}{\ell-x}$ is maximized at $x = \lfloor \ell/2 \rfloor.$ Since $n$ is also a fixed positive integer, the numerator of ${\mathbb{P}}$  is maximized at $x = \lfloor \ell/2 \rfloor$ and so ${\mathbb{P}}$ is maximized at the same point. Finally, $\ell - \lfloor \ell/2 \rfloor = \lceil \ell/2 \rceil = y.$ The Proposition follows.
\end{proof}
\subsection{How to choose $Q?$}\label{how_to_choose_Q}
If we are searching for $r-$same objects to one set (with cardinality $n$), when we pick the elements of the set from some largest set (with cardinality $m$), then we say that we have a $r-$multicollision. We have the following Lemma.
\begin{lemma}\label{lemma:multi}
If we have a set with $m$ elements and we pick randomly (and independently) $n$ elements from the set, then the expected number of $r-$mullticolisions is approximately
\begin{equation}\label{multi}
\frac{n^{r}}{r!m^{r-1}}.
\end{equation}
\end{lemma}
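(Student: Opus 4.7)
The plan is to use the standard indicator-variable / linearity-of-expectation argument, which is the cleanest way to count expected multicollisions without having to deal with inclusion-exclusion.

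First, I would formalize the counting: let $X_1,\dots,X_n$ be the $n$ independent uniform picks from the ground set of size $m$, and for every $r$-subset $S\subseteq\{1,\dots,n\}$ let $Y_S$ be the indicator of the event ``$X_i$ is the same for every $i\in S$''. The total number of $r$-multicollisions is then (up to the natural interpretation of the term) $N_r=\sum_{|S|=r} Y_S$. Next I would compute $\mathbb{E}[Y_S]$: conditioning on the value of $X_{i_0}$ for one fixed $i_0\in S$, each of the other $r-1$ indices must match it, which by independence has probability $(1/m)^{r-1}$. So $\mathbb{E}[Y_S]=1/m^{r-1}$.

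By linearity of expectation,
\begin{equation*}
\mathbb{E}[N_r] \;=\; \binom{n}{r}\cdot\frac{1}{m^{r-1}}.
\end{equation*}
Finally, under the implicit regime $r\ll n$ (which is the interesting one for birthday-type attacks, and is what makes the estimate nontrivial), I would invoke the standard approximation $\binom{n}{r}=\frac{n(n-1)\cdots(n-r+1)}{r!}\approx\frac{n^r}{r!}$, valid up to a factor $1+O(r^2/n)$. Substituting gives $\mathbb{E}[N_r]\approx \frac{n^r}{r!\,m^{r-1}}$, which is the claimed estimate.

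The only real obstacle is a definitional one rather than a technical one: the word ``multicollision'' is slightly ambiguous, and one must be careful to make the count match the stated formula. With the convention above (counting $r$-subsets of picks that are all identical) the formula is exact up to the binomial-to-monomial approximation; other natural conventions (e.g.\ counting ``maximal'' groups of size exactly $r$) would give the same leading-order answer but via a small inclusion-exclusion, so it is worth explicitly stating which one is used before starting the calculation.
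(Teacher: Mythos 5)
Your proof is correct, and the paper itself offers no argument here --- it simply cites \cite[Section 6.2.1]{Joux}, where the derivation is exactly the indicator-variable and linearity-of-expectation computation you give, followed by the approximation $\binom{n}{r}\approx n^r/r!$. Your remark about fixing the convention for what counts as an $r$-multicollision is a sensible addition, since the paper's statement inherits that ambiguity from the reference.
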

\begin{proof}
\cite[section 6.2.1]{Joux}
\end{proof}
Say we use md5 hash function. If we use the parameter $Q$, we have to truncate the output of md5, which has $16-$hex strings, to $Q-$hex strings ($Q<16$).  I.e we only consider the first $\kappa = 4\cdot Q$-bits of the output. 
Our strategy to choose $Q$ uses formula (\ref{multi}). In practice, is enough to avoid $r=3-$multicollisions in the set $U_{h_1}^*(I_1)\cup U_{h_2}^*(I_2,c)$ of cardinality $S_b,$ where 
$$S_b=|U_{h_1}^*(I_1)\cup U_{h_2}^*(I_2,c)| = \binom{|I_1|}{h_1} + \binom{|I_1|}{h_2}.$$ 
We set the formula  (\ref{multi}) equal to 1 and we solve with respect to $m$, which in our case is $m=2^{\kappa}.$ So,
$\frac{S_b^{3}}{6}=2^{2\kappa}.$ Therefore we get,
$\kappa \approx (3\log_2{S_b})/2$ (since in our examples $\log_{2}(S_b)\gg \log_2{6}).$ For instance if we have local Hamming weight $\leq 13,$ $b=n/2,$ $|I_1|=|I_2|=b,$ and $n=232,$ we get $\kappa \approx 52.$ So, $Q\approx 13.$ In fact we used $Q=12$ in our attack to Naccache-Stern knapsack cryptosystem.
\section{Applications}\label{sec:applications}

\subsection{Naccache-Stern Knapsack Cryptosystem}\label{sec:nsk}

In this section we consider a second application of MSPP to cryptography. We shall provide an attack to a public key cryptosystem.  Naccache-Stern Knapsack (NSK) cryptosystem is a public key cryptosystem (\cite{NC}) based on the Discrete Logarithm Problem (DLP), which is a difficult number theory problem. Furthermore, it is based on another combinatorial problem, the Modular Subset Product Problem. Our attack applies to the latter problem. 
NSK cryptosystem is defined by the following three algorithms.\\\\ 
{\bf i}. \textbf{Key Generation:} \\
Let $p$ be a large safe prime number (that is $(p-1)/2$ 	is a prime number). Let  $n$ denotes the largest positive integer such that:
\begin{equation}\label{pandn}
p>\prod_{i=0}^{n}p_{i},
\end{equation}
where $p_{i}$ is the $(i+1)-$th prime.
The message space of the system is 
$\mathcal{M}=\{0,1\}^{n+1},$ %\{2^{n},...,2^{n+1}-1\},$$ 
this is the set of the binary strings of $(n+1)-$bits. For instance, if $p$ has $2048$ bits, then $n=232$ and if $p$ has $1024$ bits, then $n=130.$

We randomly pick a positive integer $s<p-1$, such that $
\gcd(s,p-1)=1$. This last property guarantees that there 
exists the (unique) $s-$th root $\mod{p}$ of an element in 
${\bf Z}_p^{*}.$ Set 
$$u_{i}=\sqrt[s]{p_{i}}  \mod{p}\in {\bf{Z}}_p^{*}.$$
The public key is the vector 
$$(p,n;u_{0},...,u_{n})\in {\bf{Z}}^2 \times ({\bf{Z}}_p^{*})^{n+1} $$ and the secret key is $s$.\\\\
{\bf ii}. \textbf{Encryption:}\\
Let $m$ be a message and $\sum_{i=0}^{n}2^{i}m_{i}$ its binary expansion.
The encryption of the $n+1$ bit message $m$ is $c=\prod_{i=0}^{n}u_{i}^{m_{i}}  \mod{p}$.\\
{\bf iii}. \textbf{Decryption:}\\
To decrypt the ciphertext $c,$  we compute 
$$m=\sum_{i=0}^{n}\frac{2^{i}}{p_{i}-1}\times \big(\gcd(p_{i},c^{s}\mod{p})-1\big).$$  

From the description of the NSK scheme, we see that the security is based on the Discrete Logarithm Problem (DLP). It is sufficient to solve $u_i^x=p_i$ in ${\bf Z}_p^{*},$ for some $i.$ The best algorithm for computing DLP in prime fields has subexponential bit complexity, \cite{dlp-sub1,dlp-sub2}. Thus, for large $p$ (at least $2048$ bits) the system can not be attacked by using the state of the art algorithms for DLP. 

We have also assumed that the prime number $p$ belongs to the special class of safe primes to prevent attacks such as, Pollard rho \cite{RHO}, Pollard $p-1$ algorithm \cite{p-1}, Pohlig-Hellman algorithm \cite{Pohlig} or any similar procedure that exploits properties of $p-1.$ 

\subsection{The attack}
Since, $c\equiv\sum_{i=0}^{n}2^{i}m_i \mod{\Lambda},$
we get
\begin{equation}\label{birthday_equation}
\prod_{i\in I_1}u_i^{m_i} = c \prod_{i\in I_2}u_i^{-m_i} \ \ (\rm{in}\ \ {\bf Z}_\Lambda).
\end{equation} 
So we can apply ${\rm BA\_MSPP}_{\Lambda}$ with input ${\mathcal{P}}=(u_i)_i$ and $c$ and for some bound $b$ and hamming weight of the message $m$ say $H_m$ i.e. the number of $1'$s in the binary message $m.$ So in this attack we assume that we know the hamming weight or an upper bound of it. To be more precise, this attack is feasible only for small or large hamming weights. Our parallel version allow us to consider larger hamming weights than in \cite{original}. 

In the following algorithm we call algorithm \ref{2}, where we execute steps 4 and 5 in parallel (in function ${\rm BA\_MSPP}_{\Lambda}$).
\ \\	\\
\textbf{ {Algorithm\refstepcounter{myalgorithm}\label{3} \arabic{myalgorithm} : Attack to NSK cryptosystem}}\\
		{\texttt{INPUT: $\circ$ The cryptographic message $c$ 
		\\$\circ$  the Hamming weight $H_m$ of the message $m$ 
		\\  $\circ$ a bound $b\leq n/2$ 
		\\ $\circ$  the public key $pk = (p,n;u_0,...,u_n)$ of NSK cryptosystem}\\\\
	 {\texttt{OUTPUT: the message $m$ or Fail}\\\\
	\texttt{1:}   ${\mathcal{P}}\leftarrow \{u_0,...,u_n\}$\\
	\texttt{2:}   ${\mathbb{S}}\leftarrow {\rm{BA\_MSPP}}_{p}({\mathcal{P},c;b,H_m})$\\
	\texttt{3:}   if ${\mathbb{S}}\not= \emptyset$ construct $m.$ Else return {\texttt{Fail}}

\subsubsection{Reduction of the case of large Hamming weight messages}

The case where we have large Hamming weight of a message can be reduced to the case where we have small Hamming weight.
Indeed, if the message $m$ has $H_m=n+1-\varepsilon,$ where $\varepsilon$ is a small positive integer, then again we can reduce the problem to one with small Hamming weight. Let $c=Enc(m)$ and 
$c'=c^{-1}u_n^2\prod_{i=0}^{n-1}u_i.$ We provide the following Lemma.
\begin{lemma}\label{lemma}
The decryption of $c'$ is $m'=2^{n+1}+2^{n}-m-1,$ where $H_{m'}=\varepsilon + 1.$
\end{lemma}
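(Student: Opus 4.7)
The plan is to unfold the algebra defining $c'$, recognise it as a legitimate NSK ciphertext for some new plaintext $m'$, and then read $m'$ off directly in both integer and bitwise form. Substituting the encryption identity $c=\prod_{i=0}^{n}u_i^{m_i}$ into the definition of $c'$ gives
\begin{equation*}
c' = c^{-1}u_n^{2}\prod_{i=0}^{n-1}u_i = u_n^{2-m_n}\prod_{i=0}^{n-1}u_i^{1-m_i}.
\end{equation*}
Under the (implicit) assumption that the top bit $m_n$ equals $1$, the exponent of $u_n$ drops to $1$, so $c'$ is itself the NSK encryption of the bit string $m'$ whose bits are $m'_n=1$ and $m'_i=1-m_i$ for $i<n$.

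From here I would simply convert bits back to an integer. Writing $m'=2^n+\sum_{i=0}^{n-1}2^i(1-m_i)=2^n+(2^n-1)-\sum_{i=0}^{n-1}2^i m_i$ and using the fact that $\sum_{i=0}^{n-1}2^i m_i = m - 2^n$ (a consequence of $m_n=1$), I obtain $m'=2^{n+1}+2^n-m-1$, exactly as claimed. For the Hamming weight I would read it straight off the bit pattern: the top bit contributes $1$ and the remaining $n$ bits are the bitwise complement of the lower $n$ bits of $m$, so
\begin{equation*}
H_{m'} = 1 + \bigl(n - (H_m - 1)\bigr) = n + 2 - H_m = \varepsilon + 1.
\end{equation*}

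The one real subtlety I expect is the tacit hypothesis $m_n=1$: if instead $m_n=0$, the stray factor $u_n^{2}$ in $c'$ is not a valid NSK encoding and the stated formula yields $m'\geq 2^{n+1}$, outside the message space. However, since the reduction is designed for $H_m=n+1-\varepsilon$ with $\varepsilon$ small, there is always some index $j\leq n$ with $m_j=1$, and rerunning the same algebraic manipulation with $n$ replaced by such a $j$ (and $u_n^{2}$ replaced by $u_j^{2}$) extends the lemma to the general case; beyond this bookkeeping, the verification is a routine algebraic identity and presents no further obstacle.
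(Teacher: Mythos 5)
Your argument is correct, and it is worth noting that the paper itself offers no proof here at all --- it simply cites Lemma~3.4 of the reference \cite{original} --- so your self-contained derivation is genuinely more informative than what the text provides. The computation $c' = u_n^{2-m_n}\prod_{i=0}^{n-1}u_i^{1-m_i}$, the identification of $c'$ as the encryption of the complemented-and-shifted message, and the two arithmetic identities for $m'$ and $H_{m'}$ are all right. You are also correct that the hypothesis $m_n=1$ is genuinely needed and genuinely tacit: when $m_n=0$ the stated $m'$ exceeds $2^{n+1}-1$ and so cannot be the output of NSK decryption (and the factor $u_n^{2}$ ruins the gcd-based recovery, since $c'^{s}\equiv p_n^{2}\prod_{i<n}p_i^{1-m_i}$ may exceed $p$), so the lemma as literally written fails in that case. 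Your proposed repair --- replacing the distinguished index $n$ by any $j$ with $m_j=1$, which must exist since $H_m=n+1-\varepsilon>0$ --- is the natural fix, though one should note it changes the formula to $m'=2^{n+1}-1-m+2^{j}$ rather than preserving the stated one; the Hamming-weight conclusion $H_{m'}=\varepsilon+1$, which is what the reduction actually uses, survives unchanged. In short: no gap in your reasoning, and a caveat that the paper's own (outsourced) statement glosses over.
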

\begin{proof}
\cite[Lemma 3.4]{original}
\end{proof}	

So we can consider $H_m<b.$ Indeed, if $H_m\geq b,$ we apply the previous Lemma and we get $H_{m'} < b.$ So finding $m'$ is equivalent finding $m.$ 
\subsubsection{The case of knowing some bits of the message}
If we know the position of some bits of the message $m$ (for instance by applying a fault attack to the system may leak some bits), then we can improve our attack. In this case, we choose $I_1$ and $I_2$ in algorithm \ref{3}, 
from the set $\{0,1,..,n\}-K,$ where the set $K$ contains the positions of the known bits. Also, in line 10, when we reconstruct the message $m$ (in case of a collision) we need to put the known bits to the right positions.

\subsection{Experimental results for NSK cryptosystem}\label{exp_results}
In our implementation\footnote{The code can be found in \url{https://goo.gl/t9Fa68}} we used C/C++ with GMP library \cite{gmp} and for parallelization OpenMP \cite{openmp}. We used $20$ threads of an Intel(R) Xeon(R) CPU E5-2630 v4 $@ 2.20GHz,$ in a Linux platform.

First, in table \ref{Tab:results-1} we present the improvement of the results provided in \cite{original} by using the parallel version.
Besides, in table \ref{Tab:results-2} we extend the results of the previous table. In fact, table \ref{Tab:results-2} demonstrates that having a suitable number of threads and considering a suitable bound $b$ we get a practical attack for low Hamming weight messages. 
In figure \ref{fig:1} we represent some of our data graphically.
\begin{table}[h!]
\begin{tabular}{|l||lll||lll||ll|}
\hline 
${\rm len}(p)$  & & \ \  $600$ &  &  &$1024$ & &  \ \ \ \ $2048$ &  \\ \hline \hline  
$H_m$ &  8 & 9 & 10 & 8 & 9 & 10  & 7 & 8 \\ \hline 
{\rm Attack \cite{original} } &  $42$s & $300$s  & $7.5$m & $8.1$m & $1.1$h  &$1.96$h &$1.17$h& $2.15$h  \\ \hline  
{\rm Parallel Attack } &  $<1$s & $6.54$s  & $15.56$s & $11.44$s & $70$s  &$284$s &$70$s& $11.46$m      \\ \hline 
\end{tabular}
\caption{We used $b=n/2, Q=12,$ where $n=84,130$ and $232,$ for ${\rm{len}}(p)=600,1024$ and $2048$ bits, respectively. For each column, we executed $10$ times the attack of \cite{original} and our parallel version (algorithm \ref{3}), and we computed the average CPU time. } 
\label{Tab:results-1}
\end{table} 
{\small{
\begin{table}[h!]
{
\begin{center}
\begin{tabular}{|l||lll||lll||lll|}        \hline
 ${\rm len}(p)$   & & \ \  $600$ &  &  &$1024$ & &  \ \ \ \ & $2048$  & \\ \hline 
 $n$   & & \ \  $84$ &  &  &$130$ & &  \ \ \ \ & $232$  & \\ \hline \hline
 $H_m$            & 12       & 13       & 14         & 11       & 12     & {\bf{13}}     &   9      & 10      & {\bf 11}\\ \hline
 Parallel (time)  & $4$m     & $7$m     & $13$m   & $26$m    &  $66$m   & $142$m      & $52$m    & $107$m  & $895$m \\ \hline
 Mem. (GB)        & 1.43     & 4.93     & 7.5      & 14.28    & 21.67  & 67.41         & 26.9     & 40.61   & 127.53\\ \hline
Average Rounds    & 4.2      & 2.4      & 2.6        & 8.2      & 2.6    & 5.6           & 7.4      & 2.4     & 10\\ \hline
\end{tabular}
\caption{Extension of table \ref{Tab:results-1}. For the case $H_m=13,\ Q=12,$ and
 ${\rm{len}}(p)=1024$ we used $b=60$ instead of $b=\frac{n}{2}=65.$ 
 For the case $H_m=11$ and  ${\rm{len}}(p)=2048$ we used  $b=n/2-23=93.$
 For all other cases we used $b =\frac{n}{2}.$ The last row, Average
  Rounds, is the (average) round that eventually our algorithm terminates. Theoretically is approximately $\frac{1}{{\mathbb{P}}}$ (see Lemma \ref{lemma:step7}).}
\label{Tab:results-2}
\end{center}
}
\end{table}
}}

\begin{figure}[!htb]
%\raggedright
\includegraphics[scale=0.45]{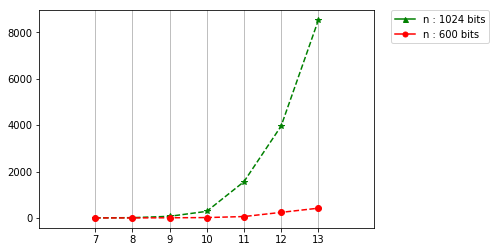}
\caption{The horizontal axis is the hamming weight $H_m$ and the vertical axis is the cpu time in seconds (for the parallel attack). Using {\tt FindFit} of Mathematica \cite{mathematica}, we computed the following approximation formulas that best fit to our data, $T_{600}(H_m) = 0.003 \cdot e^{0.71H_m},$  $T_{1024}(H_m) = 0.029 \cdot e^{0.76H_m}$ and  $T_{2048}(H_m) = 0.74\cdot e^{0.9H_m}$ (seconds).}\label{fig:1}
\end{figure}

\subsection{Carmichael Numbers}\label{section:carmichael}
 Fermat proved that if $p$ is a prime number, then $p$ divides $a^p-a$ for every integer $a.$ This is known as Fermat's Little Theorem. The question if the converse is true has negative answer. In fact in 1910 Carmichael noticed that $561$ provides such a counterexample. A Carmichael number\footnote{See also, \url{http://oeis.org/A002997}} is a positive composite integer $n$ such that $a^{n-1}\equiv 1 \pmod{n}$ for every integer $a,$ with $1<a<n$ and $\gcd(a,n)=1.$  They named after Robert Daniel Carmichael (1879-1967). Although, the Carmichael numbers between $561$ and $8911$ i.e. the first seven, initially they discovered by the Czech mathematician V. ${\rm{\breve{S}}}$imerka in 1885 \cite{simerka}. 
 In 1910 Carmichael  conjectured  that  there  is  an  infinite  number  of Carmichael numbers. This conjecture was proved in 1994 by Alford, Granville, and Pomerance \cite{annals}. Although, the problem if there  are infinitely  many Carmichael numbers with exactly $R\geq 3$ prime factors, is remained open until today.
We have the following criterion.
\begin{proposition}(Korselt, 1899, \cite{korselt})
A positive integer $n$ is Carmichael if and only if is composite, square-free and for every prime $p$ with $p|n,$ we get $p-1|n-1.$
\end{proposition}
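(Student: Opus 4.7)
The plan is to prove Korselt's criterion by handling both implications separately, with the Chinese Remainder Theorem and primitive roots as the main tools. I would first fix notation: write $n$ for the composite integer in question, and for each prime divisor $p$ of $n$ let $v_p(n)$ denote its exponent.

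For the easier direction ($\Leftarrow$), assume $n$ is composite, squarefree, and that $p-1 \mid n-1$ for every prime $p \mid n$. Take any $a$ coprime to $n$. For each prime $p \mid n$, $\gcd(a,p)=1$, so Fermat's little theorem gives $a^{p-1} \equiv 1 \pmod{p}$; raising to the power $(n-1)/(p-1)$ yields $a^{n-1} \equiv 1 \pmod{p}$. Since $n$ is squarefree, $n = p_1 \cdots p_k$ with distinct primes, and the congruences combine via CRT to $a^{n-1} \equiv 1 \pmod{n}$. This step is entirely routine.

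For the harder direction ($\Rightarrow$), assume $n$ is Carmichael and prove both squarefreeness and the divisibility condition. For squarefreeness, I would argue by contradiction: if $p^2 \mid n$, use the fact that $(\mathbb{Z}/p^2\mathbb{Z})^*$ is cyclic of order $p(p-1)$, pick a generator $g$, and use CRT to lift $g$ to an integer $a$ with $a \equiv g \pmod{p^2}$ and $a \equiv 1 \pmod{n/p^{v_p(n)}}$, so that $\gcd(a,n)=1$. The Carmichael property then forces $a^{n-1} \equiv 1 \pmod{p^2}$, hence $p(p-1) \mid n-1$, which implies $p \mid n-1$; combined with $p \mid n$ this gives $p \mid 1$, a contradiction. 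For the divisibility $p-1 \mid n-1$, pick a primitive root $g$ modulo $p$ and use CRT (now legitimately, since $n$ is squarefree, so $\gcd(p, n/p)=1$) to produce $a$ with $a \equiv g \pmod{p}$ and $a \equiv 1 \pmod{n/p}$. Then $\gcd(a,n)=1$, so $a^{n-1} \equiv 1 \pmod{n}$, which reduces to $a^{n-1} \equiv 1 \pmod{p}$. Since $a$ has order exactly $p-1$ modulo $p$, this forces $p-1 \mid n-1$.

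The main obstacle, such as it is, lies in the squarefree step: one must recall or prove that $(\mathbb{Z}/p^2\mathbb{Z})^*$ is cyclic (for odd $p$; the case $p=2$ needs a tiny separate check, but $4 \mid n$ with $n$ Carmichael is quickly ruled out since any Carmichael number is odd, which itself follows from testing $a=-1$). The rest is a clean CRT-plus-Fermat packaging, and I would keep the write-up short by stating the CRT lifting explicitly only once and then reusing it.
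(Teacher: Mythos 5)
Your proof is correct and complete. Note that the paper itself does not prove this proposition at all: it merely cites Chernick's 1939 note for ``a simple elegant proof,'' so there is no in-paper argument to compare against. What you have written is the standard textbook proof of Korselt's criterion --- Fermat plus CRT for the easy direction, and primitive roots modulo $p$ (respectively the cyclicity of $(\mathbf{Z}/p^2\mathbf{Z})^*$) for the divisibility and squarefreeness claims in the hard direction --- and every step checks out, including the correct handling of $\gcd(a,n)=1$ in the CRT liftings and the dismissal of $p=2$ via the observation that Carmichael numbers are odd. One cosmetic remark: in the squarefreeness step you only need $a \equiv g \pmod{p^2}$ and $a \equiv 1 \pmod{n/p^{v_p(n)}}$ to conclude $\gcd(a,n)=1$, exactly as you wrote, so the argument does not depend on $v_p(n)=2$; this is fine as stated.
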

For a simple elegant proof see \cite{chernick}.
We define the following function.
$$\lambda(2^{a})=\phi(2^{a})\ \text{if}\ a=0,1,2$$
and 
$$\lambda(2^{a})=\frac{1}{2}\phi(2^a),\text{if}\ a>2.$$
If $p_i$ is odd prime and $a_i$ positive integer
$$\lambda(p_i^{a_i})=\phi(p_i^{a_i}).$$
If $n=p_1^{a_1} p_2^{a_2}\cdots p_k^{a_k}$ then
$$ \lambda(n) = \operatorname{lcm}\big(\lambda(p_1^{a_1}),\,\lambda(p_2^{a_2}),\,\ldots,\,\lambda(p_k^{a_k}) \big).$$
Korselt's criterion can be written as
\begin{proposition}(Carmichael, 1912, \cite{Robert_Carmichael}) $n$ is Carmichael if and only if is composite and 
$n\equiv 1\pmod{\lambda(n)}.$
\end{proposition}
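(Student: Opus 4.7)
The plan is to deduce this proposition directly from Korselt's criterion (the preceding Proposition), treating the two as logical reformulations of the same condition. The equivalence amounts to showing that, among composite $n$, the divisibility $\lambda(n)\mid n-1$ is the same condition as being squarefree with $p-1\mid n-1$ for every prime $p\mid n$. So the argument splits into two implications, and the heart of the work is extracting squarefreeness from $\lambda(n)\mid n-1$.

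First I would handle the easy direction. Assume $n$ is Carmichael. By Korselt, $n$ is composite, squarefree, and $p-1\mid n-1$ for every prime $p\mid n$. Squarefreeness means $n=p_1p_2\cdots p_k$, so each exponent $a_i=1$ and $\lambda(p_i)=p_i-1$ (note $p_i$ odd or $p_i=2$ with exponent $1$, both giving $\lambda(p_i)=p_i-1$). Hence $\lambda(n)=\operatorname{lcm}(p_1-1,\dots,p_k-1)$, and since each $p_i-1$ divides $n-1$, so does their least common multiple. This gives $n\equiv 1\pmod{\lambda(n)}$.

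For the converse, assume $n$ is composite and $\lambda(n)\mid n-1$. The step that actually requires work is proving $n$ is squarefree; once this is in hand, we will have $\lambda(n)=\operatorname{lcm}_{p\mid n}(p-1)$, and for each prime $p\mid n$, $p-1\mid \lambda(n)\mid n-1$, so Korselt's criterion applies and $n$ is Carmichael. To prove squarefreeness, suppose for contradiction that $p^a\,\|\,n$ with $a\ge 2$. Then $\lambda(p^a)\mid \lambda(n)\mid n-1$. I would split on the parity of $p$: if $p$ is odd, then $\lambda(p^a)=p^{a-1}(p-1)$ is divisible by $p$, so $p\mid n-1$, contradicting $p\mid n$. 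If $p=2$, then either $a=2$ and $\lambda(4)=2$, or $a\ge 3$ and $\lambda(2^a)=2^{a-2}$ is even; in both subcases $2\mid n-1$, contradicting the fact that $4\mid n$ makes $n$ even. Either way we get a contradiction, so $n$ is squarefree.

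The main (and only) subtle point is this squarefreeness step, in particular remembering to treat $p=2$ separately because of the exceptional definition $\lambda(2^a)=\tfrac12\phi(2^a)$ for $a>2$; the odd-prime case is immediate from the factor $p$ appearing in $\lambda(p^a)$, while the $p=2$ case uses that $n$ is even whenever $4\mid n$. With squarefreeness secured, the rest is just bookkeeping via Korselt's criterion.
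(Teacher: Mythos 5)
Your argument is correct, and it takes the route the paper itself intends: the paper gives no proof of this proposition, merely presenting it as a rewriting of Korselt's criterion with a citation to Carmichael's 1912 paper, and your derivation from Korselt's criterion supplies exactly the missing details. The one genuinely nontrivial step, extracting squarefreeness from $\lambda(n)\mid n-1$, is handled correctly, including the separate treatment of $p=2$ forced by the exceptional definition of $\lambda(2^{a})$.
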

Using the previous, we can prove that a Carmichael number is odd and have at least three prime factors.
Furthermore, we can calculate some Carmichael numbers (the first 16): 561, 1105, 1729, 2465, 2821, 6601, 8911, 10585, 15841, 29341, 41041, 46657, 52633, 62745, 63973, and 75361. 
In \cite{carmichael} they used an idea of Erd\H{o}s \cite{erdos} to find Carmichael numbers with many prime factors. In 1996, Loh  and Niebuhr \cite{loh} provided a Carmichael with $1,101,518$ prime factors using Erd\H{o}s heuristic algorithm (Algorithm \ref{1}). Also, an analysis and some refinements of \cite{loh} and an extension to other pseudoprimes was provided by Guillaume and Morain in 1996 in \cite{extension_of_loh}. Further, in the same paper the authors provided a Carmichael number having 5104 prime factors. 
In 2014 \cite[Table 1]{carmichael} the authors provided two large Carmichael numbers with many prime factors. The first one with $1,021,449,117$ prime factors and $d = 25,564,327,388$ decimal digits and the second with $10,333,229,505$ prime factors, with $d=29,548,676,178$ decimal digits. 

Also, in 1975 J. Swift \cite{swift} generated all the Carmichael numbers below $10^{9}.$ 
In 1979, Yorinaga \cite{yorinaga} provided a table for Carmichael numbers up to $10^{10}$ using the  method of Chernick (this method allow us to construct a Carmichael  number having already one, see \cite[Theorem 2.2]{extension_of_loh}). In 1980, Pomerance, Selfridge and Wagstaff \cite{pom}  generated Carmichael numbers up to $25\cdot 10^{10}.$ In 1988 Keller \cite{keller} calculated the Carmichael numbers up to $10^{13}.$ In 1990, Jaeschke \cite{Jaeschke} provided tables for Carmichael numbers up to $10^{12}.$ 
Pinch provided a table for all Carmichael numbers up to $10^{18}$ (\cite{pinch3}). Also the same author in 2006 \cite{pinch4} computed all Carmichael numbers up to $10^{20}$ and in 2007 \cite{pinch5} a table up to $10^{21}.$ Furthermore, he found $20138200$ Carmichael numbers up to $10^{21}$ and all of them have at most $12$ prime factors. 

For an illustration of our algorithm we also generated some tables for Carmichael numbers having many prime factors\footnote{see, \url{https://github.com/drazioti/Carmichael} }. For instance we produced Carmichael numbers up to $250$ prime factors. Each instance was generated in some seconds.  Also Carmichael numbers with $11725$ and $19589$ prime factors were generated in some hours with our algorithm, in a small home PC (I3/16Gbyte) using a C++/gmp implementation.

The following method is based on Erd\H{o}s idea \cite{erdos}. It was used in \cite{carmichael,loh} to produce Carmichael numbers having large number of prime factors. 
\\\\
{\bf Algorithm  \refstepcounter{myalgorithm}\label{4} \arabic{myalgorithm} : Generation of Carmichael Numbers}\\
	{\texttt{INPUT: A positive integer $r$ and a vector ${\bf H}=(h_1,h_2,...,h_r)\in {\bf Z}^{r},$ with $h_1\geq h_2\geq\cdots \geq h_r\geq 1.$ Also we consider two positive integers $\ell$ and $b$ which correspond to the local hamming and the bound, respectively.}}\\
	 {\texttt{OUTPUT: A Carmichael number or Fail}}\\\\
	 \texttt{1:}   {\texttt{$Q\leftarrow \{q_1,...,q_r\}$ the $r-$first prime numbers}}\\
     \texttt{2:}   $\Lambda\leftarrow q_1^{h_1}\cdots q_r^{h_r}$ \\
    \texttt{3:}    ${\mathcal{P}}\leftarrow \{p: p\ \text{prime}\ p-1|\Lambda,\ p\not{|}\Lambda\}$\\
	\texttt{4:}    $S\leftarrow{\rm BA\_MSPP}_{\Lambda}({\mathcal{P}},1;b,\ell) $%${\mathcal{•}l{S}}\leftarrow MSPP_{\Lambda}({\mathcal{P}},1)$
	\\
    \texttt{5:}    {\bf If} $|S|\geq 2$ {\bf return} $\prod_{p\in {S}}p$\\
    \texttt{6:}    $B\leftarrow \prod_{p\in\mathcal{P}}p$\\
   \texttt{7:}    ${{T}}\leftarrow{\rm BA\_MSPP}_{\Lambda}({\mathcal{P}},B;b,\ell) $%${\mathcal{T}}\leftarrow MSPP_{\Lambda}({\mathcal{P}},b)$
   \\
    \texttt{8:}    {\bf If} $|{T}|\geq 2$ {\bf return} $B/\prod_{p\in T}p$\\
     \texttt{9:}   {\bf else return}} Fail\\\\
  In line 8, we return the number $\displaystyle \prod_{p\in \mathcal{P}-{T}}p.$\\    
{\bf Correctness}.\\
It is enough to prove that the numbers returned in steps 5 and 8 are Carmichael. Set $n=\prod_{p\in{{S}}}p.$ 
We shall prove it for step 5.  The set $S$ contains all the primes of $\mathcal{P}$ such that their product is equivalent to $1\pmod{\Lambda}.$ Since $|{S}|\geq 2,$ $n$ is composite and also is squarefree. Say a prime $p$ is such that $p|n.$ Since $n\equiv 1\pmod{\Lambda}$ i.e. $\Lambda|n-1$ we get $p-1|n-1.$ Indeed, this is immediate since $p-1|\Lambda.$ From Korselt's criterion we get that $n$ is Carmichael. Similar for the step 8.\\\\
 We have set $\ell = local\_hamming.$ In case of success, the output of the algorithm is a Carmichael number with $\ell$ or $|{\mathcal{P}}|-\ell$ prime factors. In fact, if we want to calculate a Carmichael number with many prime factors, we can ignore the lines 4 and 5 and consider a large set ${\mathcal{P}}.$
 An estimation for $|\mathcal{P}|$ was given in \cite[formula 4]{loh},
$$|\mathcal{P}|\approx g(\Lambda)\prod_{j=1}^{r}\Big( h_j+\frac{q_j-2}{q_j-1}\Big),\ \text{where}\ g(\Lambda)=\frac{\Lambda}{\phi(\Lambda)\ln{\sqrt{2\Lambda}}}.$$

In lines 1 and 2 we initialize the algorithm. Since in practice $r$ is not large enough, both these steps are very efficient.

In line 3 we calculate the set  $\mathcal{P}.$ 
One way to construct this set is the following. Say $d|\Lambda.$ If $d+1$ is prime with $d+1\not \in Q$ then $d\in {\mathcal{P}}.$ To find the divisors of $\Lambda$ having their prime divisors is a simple combinatorial problem. We can implement this without using much memory. Even better, we can use \cite[Section 8]{carmichael} where they keep only the exponents  of the divisors of $\Lambda.$ Since the set ${\mathcal{P}}$ contains integers of the form $2^{a_1}3^{a_2}\cdots p_r^{a_r}+1$ with $0\leq a_i\leq h_i,$ instead of storing $2^{a_1}3^{a_2}\cdots p_r^{a_r}+1$ we can store $(a_1,...,a_r).$ Overall $8r|\mathcal{P}|$ bits or $r|\mathcal{P}|$ bytes. So the set ${\mathcal{P}}$ is {\it nice}, since the set $B$ in formula (\ref{formula:memory}) needs $O(|{\mathcal{P}}|\log_{2}(|{\mathcal{P}}|))$ bits for storage.

 In line 4 (and 7), we use algorithm \ref{2} with $b = bound$ and $\ell = local\_hamming$ according to the user choice. We can apply  ${\rm BA\_MSPP}$ with the parameters $Q$ and 
${\rm{iter}},$ ${\rm BA\_MSPP}_{\Lambda}({\mathcal{P}},c;b,\ell,Q,{\rm iter}).$ In \cite{loh} they picked ${T}$ randomly from $\mathcal{P}.$ 
\begin{remark}
In \cite{ carmichael} they used another algorithm inspired  by the quantum algorithm of Kuperberg and they exploit the distribution of the primes in the set $\mathcal{P}$ (which is not uniform).
\end{remark}
\begin{remark}
When $|\mathcal{P}|$ is large enough then using $B=1$ as target number we can easily find a Carmichael number with small number of prime factors (by using small local Hamming weight). If we use $B>1$ as  in line 5 we get a Carmichael number with many prime factors. As we remarked previous the number of prime factors of the Carmichael number is either $\ell$ or $|{\mathcal{P}}|-\ell.$ One advantage of the algorithm is that we can search for Carmichael numbers near $|{\mathcal{P}}|-r.$ This can be done by considering $\ell={{local\_hamming}}$ close to $r.$ In this way we quickly generated Carmichael numbers up to $250$ prime factors in a small PC.
\end{remark}

\section{Conclusions}
In the present work we considered a parallel algorithm to attack
the modular version of product subset problem. This is  a NP-complete problem which have many applications in computer science and mathematics. Here we provide two applications, one in number theory and the other to cryptography.

%The attack is based on the meet in the middle method.
First we applied our algorithm (providing a C++ implementation)
to the the problem of searching Carmichael numbers. We managed to find one with 19589 factors in a small PC in 3 hours. %e believe that this can further be improved by using a better computer and a suitable parallel version.

For the Naccache-Stern knapsack cryptosystem we updated and extended previous experimental cryptanalytic results provided in \cite{original}. The new bounds for $H_m$ concern messages having Hamming weight $\leq 11$ or $\geq 223,$ for $n=232.$ This is proved by providing experiments. But, our attack is feasible for Hamming weight $\leq 15$ or $\geq 219.$ The NSK cryptosystem system could resist to this attack, if we consider  Hamming weights in the real interval $[17,217].$
\\\\
{\bf Acknowledgments.}
The authors are grateful to High Performance Computing Infrastructure and Resources (HPC) of the Aristotle's University of Thessaloniki (AUTH, Greece), for providing access to their computing facilities and their technical support.

\end{document}